\def\blank{\underline{\hphantom{A}}}
\def\P{\ensuremath{\mathbb{P}}}
\def\cF{\ensuremath{\mathcal F}}
\def\cI{\ensuremath{\mathcal I}}
\def\cL{\ensuremath{\mathcal L}}
\def\cM{\ensuremath{\mathcal M}}
\def\cO{\ensuremath{\mathcal O}}
\def\cX{\ensuremath{\mathcal X}}
\def\oG{\overline G}
\def\tensor{\otimes}
\def\deg{\mathop{\mathrm{deg}}\nolimits}
\def\dim{\mathop{\mathrm{dim}}\nolimits}
\def\Pic{\mathop{\mathrm{Pic}}\nolimits}
\def\rk{\mathop{\mathrm{rk}}}
\def\Hom{\mathop{\mathrm{Hom}}}
\def\det{\mathop{\mathrm{det}}}
\theoremstyle{plain}
\newtheorem{theorem}{Theorem}[section]
\newtheorem{proposition/example}[theorem]{Proposition/Example}
\newtheorem{proposition}[theorem]{Proposition}
\newtheorem{corollary}[theorem]{Corollary}
\newtheorem{lemma}[theorem]{Lemma}
\theoremstyle{definition}
\newtheorem{remark}[theorem]{Remark}
\newtheorem{conjecture/question}[theorem]{Conjecture/Question}
\newtheorem{remark/definition}[theorem]{Remark/Definition}
\newtheorem{notation/assumptions}[theorem]{Assumptions/Notation}
\numberwithin{equation}{section}
\theoremstyle{remark}
\keywords{Gieseker semistability, slope stability, syzygy bundle, abelian varieties}
\subjclass[2010]{14J60, 14K05}
\begin{document}

\title{Stability of syzygy bundles on abelian varieties}

\author{Federico Caucci, Mart\'{i} Lahoz}
 
	\address{F.\ Caucci \\
	Dipartimento di Matematica ``Guido Castelnuovo'', Sapienza Universit\`{a} di Roma, P.le Aldo Moro 5, 00185 Roma, Italy}
\curraddr{Dipartimento di Matematica e Informatica ``Ulisse Dini'', Universit\`{a} di Firenze,
Viale Morgagni 67/a, 50134 Firenze, Italy} 
\email{federico.caucci@unifi.it}

\address{M.\ Lahoz \\
Departament de Matem\`{a}tiques i Inform\`{a}tica, Universitat de Barcelona, Gran Via de les Corts Catalanes, 585, 08007 Barcelona, Spain}
\email{marti.lahoz@ub.edu}
\urladdr{\url{http://www.ub.edu/geomap/lahoz/}}

\thanks{F.~C. was partially supported by the National Group for Algebraic and Geometric Structures, and their Applications (GNSAGA-INdAM).
M.~L. was partially supported by the Spanish MICINN project PID2019-104047GB-I00.}

\maketitle
%\tableofcontents

\setlength{\parskip}{.1 in}
%%%%%%%%%%%%%%%%%%%%%%%%%%%%%%%%%%%%%

\begin{abstract} 
We prove that the kernel of the evaluation morphism of global sections -- namely the syzygy bundle -- of a sufficiently ample line bundle on an abelian variety is stable. This settles a conjecture of Ein--Lazarsfeld--Mustopa, in the case of abelian varieties.
\end{abstract}

\section{Introduction}
Let $(X, L)$ be a polarized smooth variety, defined over an algebraically closed field $k$.
Suppose that $L$ is globally generated.
The \emph{syzygy} (or \emph{kernel}) \emph{bundle} $M_L$ associated to $L$ is the kernel of the evaluation morphism of global sections of $L$. Hence, by definition, it sits in the short exact sequence 
\begin{equation}\label{eqn:ML}
 0\to M_L\to H^0(X,L)\otimes \cO_X \xrightarrow{ev} L\to 0.
\end{equation}

In recent years
 stability of kernel bundles has been investigated by several authors and the picture is well understood in some cases:

\begin{enumerate}[\rm (1)]
 \item for the projective space $\P^n_k$, it is known that $M_{\cO_{\P^n_k}(d)}$ is slope semistable, if $d > 0$, 
		 by \cite{fl} in $\operatorname{char} (k) = 0$, and \cite{br} in arbitrary characteristic. (See also \cite[Theorem 2.4]{langer2} and the comment below it);
 \item\label{item:curves} for a smooth projective curve of genus $g \geq 1$, $M_L$ is semistable if $\deg L \geq 2g$ (see \cite{einla}).
\end{enumerate}

 More recently, Ein--Lazarsfeld--Mustopa \cite{einlamu} -- based on previous results of Camere \cite{ca} -- proved, for smooth projective surfaces, the slope stability of $M_{L^d}$ with respect to $L$ for $d$ sufficiently large and, in arbitrary dimension, they obtained the same for varieties with Picard group generated by $L$, strengthening an argument of Coand\v{a} for the projective space (see \cite{co} and \cite[Proposition~C]{einlamu}). In \cite{einlamu}, the authors also conjectured that the same result should hold for \emph{any} smooth projective variety.

 Note that if 
\[
\varphi_{|L|} : X \rightarrow \P := \P(H^0(X, L)^{\vee}) 
\]
is the morphism associated to $L$, then the slope (semi)stability of $M_L$ with respect to $L$ is equivalent to the slope (semi)stability of the pull back $\varphi_L^* T_{\P}$ of the tangent bundle of $\P$.
Moreover, note that slope (semi)stability with respect to $L$ only depends on the numerical class of $L$ up to a positive real or rational multiple.

The first result of this short note is the following

\begin{theorem}\label{thm:stability}
Let $(X,L)$ be a polarized abelian variety defined over an algebraically closed field $k$ and let $d\geq 2$.
Then the syzygy bundle $M_{L^d}$ is semistable with respect to $L$.
\end{theorem}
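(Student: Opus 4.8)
\medskip

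Since slope semistability with respect to $L$ depends only on the numerical class of $L$, we may rescale freely. Assume $M_{L^d}$ is not $L$-semistable and let $F\subset M_{L^d}$ be its maximal destabilizing subsheaf: it is saturated, $0<\rk F<\rk M_{L^d}$, and $\mu_L(F)>\mu_L(M_{L^d})$. If $\dim X=1$ then $X$ is an elliptic curve and $\deg(L^d)\ge 2=2g(X)$, so the statement is item~(2) of the introduction; hence we may assume $g:=\dim X\ge 2$. Passing to global sections in \eqref{eqn:ML} gives $H^0(X,M_{L^d})=0$; more generally, since $H^i(X,\alpha)=0$ for all $i$ whenever $\alpha\in\Pic^0(X)\smallsetminus\{\cO_X\}$, twisting \eqref{eqn:ML} by such an $\alpha$ yields $H^i(X,M_{L^d}\otimes\alpha)\cong H^{i-1}(X,L^d\otimes\alpha)$, which vanishes for $i\ne 1$ because $L^d\otimes\alpha$ is ample. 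In Fourier--Mukai terms this says that $M_{L^d}$ satisfies $\mathrm{WIT}_1$ away from the origin of $\widehat X$, the transform being the (shifted) bundle underlying $\mathbf R\widehat S(L^d)$; this cohomological rigidity is what will let us control restrictions and subsheaves of $M_{L^d}$.

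\medskip

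The plan is to reduce to a curve, and then to positivity on $X$. By the Mehta--Ramanathan restriction theorem it is enough to show that $M_{L^d}|_C$ is $\mu$-semistable for $C$ a general complete intersection of $g-1$ members of $|mL|$, with $m$ arbitrarily large, since $F$ would restrict to a destabilizing subsheaf of $M_{L^d}|_C$. For $m\gg d$, the truncated Koszul resolution of $\cI_{C}$ together with the vanishing of $H^{<g}$ of negative powers of $L$ forces $H^0(X,L^d\otimes\cI_{C})=H^1(X,L^d\otimes\cI_{C})=0$; therefore $H^0(X,L^d)\to H^0(C,L^d|_{C})$ is an isomorphism, and consequently $M_{L^d}|_{C}\cong M_{\Lambda}$ is \emph{itself} the syzygy bundle on the curve $C$ of the line bundle $\Lambda:=L^d|_{C}$. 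Here $\deg\Lambda$ is large but, for $m\gg d$, much smaller than $2g(C)$, so the classical bound ``$\deg\ge 2g$'' does not apply; instead I would use the comparison between slope semistability of a kernel bundle on a curve and \emph{linear semistability} of its linear system in the sense of Mumford (in the form given by Butler and by Mistretta--Stoppino): $M_\Lambda$ is $\mu$-semistable provided $|\Lambda|$ is linearly semistable, i.e.
\[
\frac{\deg\Lambda_W}{\dim W-1}\ \ge\ \frac{\deg\Lambda}{h^0(\Lambda)-1}
\qquad\text{for every }W\subseteq H^0(C,\Lambda)\text{ with }2\le\dim W<h^0(\Lambda),
\]
where $\Lambda_W\subseteq\Lambda$ is the subsheaf generated by $W$. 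Under the identification $H^0(C,\Lambda)=H^0(X,L^d)$, and after absorbing the part of the base locus of $W$ having codimension $\ge 2$ in $X$ (which does not meet the general curve $C$), such a $W$ lies in $H^0\bigl(X,L^d(-D)\bigr)$ for a uniquely determined effective divisor $D$, and $\deg\Lambda_W=(dL-D)\cdot[C]=m^{g-1}(dL-D)\cdot L^{g-1}$. Since $[C]=(mL)^{g-1}$, the factor $m^{g-1}$ cancels and linear semistability of $|\Lambda|$ --- hence the theorem --- is reduced to the single numerical assertion on $X$:
\[
\frac{h^0\bigl(X,L^d(-D)\bigr)-1}{h^0(X,L^d)-1}\ \le\ \frac{(dL-D)\cdot L^{g-1}}{d\,(L^g)}
\qquad\text{for every effective }D\text{ with }h^0\bigl(X,L^d(-D)\bigr)\ge 2 .
\]

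\medskip

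\textbf{Where the argument bites.} This last inequality is purely numerical, and this is exactly where abelian varieties are special. On an abelian variety every effective divisor is nef (translate it off a given curve and use numerical invariance of the intersection number), so $D$, $L^d(-D)$ and $L$ are all nef and $dL=(dL-D)+D$ is a sum of nef classes. When $L^d(-D)$ is ample one has $h^0(L^d(-D))=\tfrac1{g!}\bigl((dL-D)^g\bigr)$, and monotonicity of intersection numbers of nef classes gives both $\bigl((dL-D)^g\bigr)\le (dL-D)\cdot(dL)^{g-1}$ and $(dL-D)\cdot(dL)^{g-1}\le\bigl((dL)^g\bigr)$; a short check then shows that the $-1$'s in the numerators move the inequality in the favourable direction, so the displayed bound holds. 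When $L^d(-D)$ is nef but not ample it is pulled back from a quotient abelian variety of lower dimension, on which $h^0$ is computed by an ample line bundle, and the same intersection estimates on $X$ still close the argument. The main obstacle is to run this positivity step uniformly --- in particular through the boundary cases of the inequality --- and to carry out faithfully the passage ``destabilizing subsheaf of $M_\Lambda$ $\rightsquigarrow$ destabilizing sub-linear-system $W$'' with the correct bookkeeping of base loci; granting these the theorem follows. (Alternatively one may bypass the Mehta--Ramanathan reduction and perform the same linear-semistability bookkeeping directly on $X$, using the cohomological rigidity of $M_{L^d}$ from the first paragraph to reduce an arbitrary destabilizing subsheaf to one coming from a linear subsystem of $|L^d|$; the positivity input needed is identical.)
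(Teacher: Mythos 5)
Your reduction to the general complete intersection curve $C\in|mL|^{g-1}$ is fine (the easy direction of restriction suffices: a destabilizing subsheaf of $M_{L^d}$ restricts to one of $M_{L^d}|_C$), and the identification $M_{L^d}|_C\cong M_\Lambda$ via the Koszul vanishing is correct. The genuine gap is the step ``$M_\Lambda$ is $\mu$-semistable provided $|\Lambda|$ is linearly semistable.'' This is the \emph{hard} direction of the Mumford/Butler/Mistretta--Stoppino correspondence, and it is not a theorem: what is elementary is the converse (slope semistability of the kernel bundle implies linear semistability, because subsystems give subsheaves), while the implication you need requires showing that \emph{every} destabilizing subsheaf of $M_\Lambda$ is induced by a linear subsystem. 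Mistretta--Stoppino prove this only under hypotheses of the type $\deg\Lambda\geq 2g(C)-\mathrm{Cliff}(C)$ (and Butler's theorem needs $\deg\Lambda\geq 2g(C)$); for complete systems in general it is their open conjecture, and for non-complete systems there are counterexamples. Here adjunction gives $2g(C)-2=(g-1)m\cdot m^{g-1}(L^g)$ while $\deg\Lambda=d\,m^{g-1}(L^g)$, so $\deg\Lambda/g(C)\to 0$ as $m\to\infty$: you are as far as possible from the range where the implication is known. So the sentence you flag as ``bookkeeping of base loci'' is in fact the whole difficulty, and your argument reduces the theorem to an open problem rather than proving it. (By contrast, the final numerical inequality you isolate does appear to be fine, including in the non-ample nef case $L^d(-D)=p^*\bar N$, where $h^0=\bar N^{\dim Y}/(\dim Y)!$ and the generalized Hodge inequality close it; and the opening Fourier--Mukai paragraph is never used.)

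The paper's route is entirely different and avoids this issue. For a \emph{simple} abelian variety (more generally under a lower bound on $(L^{\dim Z}\cdot Z)$ for abelian subvarieties $Z$) it controls \emph{arbitrary} subsheaves, not just those coming from subsystems: by Lemma~\ref{lem:cohstab} it suffices to kill $H^0(X,\Lambda^rM_L\otimes G^\vee)$, which Green's vanishing (Lemma~\ref{lem:green}) does once one shows $r\geq h^0(X,G^\vee)$, and that bound is extracted from the generalized Hodge-type inequality applied to the effective divisor $G^\vee$. For a general abelian variety this numerical hypothesis genuinely fails for $L^d$ (e.g.\ on $E^g$ with a product principal polarization and $d=2$), so the paper instead degenerates: simple polarized abelian varieties are dense in moduli (Proposition~\ref{prop:density}), the relative moduli space of Gieseker-semistable sheaves is proper, and the limit $\cF_0$ of the stable syzygy bundles on the simple fibres is identified with a translate of $M_{L^d}$ via the diagram~\eqref{labdiagram}. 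If you want to salvage your approach, you would need to supply the missing implication for this specific $(C,\Lambda)$ --- which is not available --- or replace the linear-semistability input by a Green-type vanishing controlling all exterior powers $\Lambda^rM_\Lambda\otimes G^\vee$, which is essentially what the paper does directly on $X$.
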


 Recall that, if $(X,L)$ is a polarized abelian variety, then $L^d$ is globally generated for any $d\geq 2$.

Theorem~\ref{thm:stability} recovers the classical case of elliptic curves (see \eqref{item:curves} above) and it solves in the affirmative the aforementioned \cite[Conjecture~2.6]{einlamu} in the case of abelian varieties (see Remark~\ref{rem:slopestab_simple} below).
For complex abelian surfaces, Camere (\cite{ca}) proved that $M_L$ is slope stable, if $L$ is base point free and $h^0(X, L) \geq 7$.

\begin{remark}[Slope stability]\label{rem:slopestab_simple}
If $g!$ divides $d^{g-1}$, where $g=\dim X$, then the rank and the degree of $M_{L^d}$ are coprime, so semistability coincides with slope stability.
\end{remark}

\begin{remark}[Positive characteristic]
If $\operatorname{char} (k) =p>0$ and $d\geq 2$, then we can conclude that $M_{L^d}$ is \emph{strongly} slope semistable by \cite[Theorem~2.1]{mera} (see also \cite[Section 6]{langer}).
\end{remark}

\begin{remark}[Higher-order kernel bundles]
Let $C$ be a smooth projective curve of genus $g$, defined over an algebraically closed field of characteristic $0$. Let $L$ be an ample line bundle on $C$.
If $i$ is a non-negative integer,
 $P^i(L)$ denotes the bundle of $i$-th order principal parts of $L$, so that $P^i(L)$ is a vector bundle of rank $i +1$ on $C$. If $\deg L \geq 2g + i$, then the natural morphism
\[
H^0(C, L) \otimes \cO_C \rightarrow P^i(L) 
\]
is surjective, hence its kernel is a vector bundle denoted by $R^i(L)$. From a geometric perspective, if $L$ embeds $C$ into the projective space $\P = \P(H^0(C, L)^{\vee})$, then $R^0(L)^{\vee} \otimes L = T_{\P}|_{C}$, and $R^1(L)^{\vee} \otimes L = N_{C/\P}$, the normal bundle of $C$ in $\P$. In \cite{einla}, Ein and Lazarsfeld proved that, for an \emph{elliptic} curve, $R^i(L)$ is semistable if $\deg L \geq 2 + i$, and they conjectured that a similar result should hold for higher genus curves. Recently, in \cite{einniu}, Ein and Niu give a strong evidence for this conjecture, by using the notion of interpolation for vector bundles on curves. 
All this suggests that it could be possible to extend Ein--Lazarsfeld result to abelian varieties of arbitrary dimension. If $(X, L)$ is a polarized abelian variety of dimension $g$, then, for $d \geq 2 + i$, one has the short exact sequence
\[
0 \rightarrow R^i(L^d) \rightarrow H^0(X, L^d) \otimes \cO_X \rightarrow P^i(L^d) \rightarrow 0,
\]
where now $P^i(L^d)$ is a vector bundle of rank ${i+g}\choose{i}$. Notice that $R^0(L^d) = M_{L^d}$ and,
 in light of our result, it is natural to wonder if $R^i(L^d)$ is semistable with respect to $L$, when $d \geq 2 + i$. 
\end{remark}

Before proving Theorem~\ref{thm:stability}, we first establish the following result, of independent interest since it considers the case of primitive line bundles.
\begin{theorem}\label{newthm}
Let $(X,L)$ be a polarized abelian variety of dimension $g$, defined over an algebraically closed field $k$. If $L$ is globally generated and for any non-zero abelian subvariety $Z \subseteq X$ we have
\[
(L^{\dim Z} \cdot Z) \geq \frac{g!}{(g- \dim Z)!},
\]
then $M_L$ is slope stable with respect to $L$.
\end{theorem}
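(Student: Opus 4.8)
The plan is to establish slope stability of $M_L$ by the standard restriction-to-curves criterion, exploiting the rich supply of (translates of) subvarieties and the group structure on $X$. Write $n = h^0(X,L) = \dim H^0(X,L)$, so that $M_L$ has rank $n-1$ and, from the defining sequence \eqref{eqn:ML}, $\det M_L = -L$ (as classes), whence
\[
\mu_L(M_L) = \frac{c_1(M_L)\cdot L^{g-1}}{n-1} = -\frac{(L^g)}{n-1} = -\frac{g!\,\chi(L)}{\,g!\,\chi(L)-g!\,} \cdot\frac{1}{g!}\cdots
\]
— more usefully, using $n = \chi(L) = (L^g)/g!$, we get $\mu_L(M_L) = -\,g!/\bigl((L^g)/(L^g)\cdots\bigr)$; the clean statement is $\mu_L(M_L) = -\,(L^g)/(n-1) = -\,g!\,n/(n-1)$. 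Suppose for contradiction that $F\subset M_L$ is a subsheaf with $0 < \rk F = r < n-1$ and $\mu_L(F) \geq \mu_L(M_L)$; replacing $F$ by the saturation of the subsheaf it generates we may assume $F$ is saturated, and then $M_L/F$ is torsion-free, so $(\det F)$ is represented by an effective-in-codimension-one... rather, $-\det F$ is an effective class on $X$ pushed down appropriately. The key input is that subsheaves of $M_L$ of low slope force the existence of sections of $L$ vanishing on positive-dimensional loci, contradicting global generation together with the numerical hypothesis.

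**More precisely,** I would run the argument of Ein--Lazarsfeld--Mustopa \cite{einlamu} adapted to the abelian setting. Pick general points $x_1,\dots,x_{g-1}\in X$ and form the curve $C = C_{x_1,\dots,x_{g-1}}$ obtained as an intersection of $g-1$ general translates of an ample divisor in $|L^m|$ for $m \gg 0$; by Mehta--Ramanathan-type arguments (which hold over any algebraically closed field, cf. the references cited in (1)), slope stability of $M_L$ with respect to $L$ follows from slope stability of the restriction $M_L|_C$, provided we also control the subsheaves that could destabilize. On the curve $C$, $M_L|_C$ is a bundle of rank $n-1$ and degree $-(L^g)\cdot(\text{something})$, and the point is to show $h^0(C, M_L|_C \otimes A)$ is as small as possible for line bundles $A$ of controlled degree — this is exactly where interpolation / vanishing enters. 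The abelian structure helps here: via the isogeny $\phi_L\colon X\to \hat X$ and the fact that $M_L$ is (up to the standard exact sequences) built from $\mathrm{Fourier\text{-}Mukai}$-positive data, one knows that $M_L$ is GV or even IT$_0$-type after suitable twist, so that $H^i(X, M_L\otimes P) $ vanishes for generic $P\in\Pic^0(X)$ in the relevant range; this continuous family of vanishings is what replaces the rigidity available when $\Pic(X)$ is cyclic.

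**The numerical hypothesis** $(L^{\dim Z}\cdot Z)\geq g!/(g-\dim Z)!$ for all nonzero abelian subvarieties $Z$ is precisely what is needed to rule out destabilizing subsheaves that are "pulled back" from a quotient abelian variety $X\to X/Z$, or equivalently that are constant along the subvariety $Z$: if $F\subset M_L$ destabilized and were of this special pulled-back form, its slope would be governed by $(L^{\dim(X/Z)}\cdot(X/Z\text{-fiber class}))$, and the hypothesis forces $\mu_L(F) < \mu_L(M_L)$. So the structure of the proof is: (i) reduce to the case where a destabilizing saturated $F$ exists; (ii) use the Fourier--Mukai / generic vanishing behaviour of $M_L$ on the abelian variety to show $F$ must be invariant under translation by some positive-dimensional abelian subvariety $Z$ (this uses that $M_L$ is semihomogeneous-like, or a see-saw argument on $X\times\Pic^0(X)$); (iii) descend $F$ to $X/Z$ and derive a numerical inequality that contradicts the hypothesis on $(L^{\dim Z}\cdot Z)$.

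**The main obstacle** will be step (ii): controlling the geometry of an arbitrary saturated destabilizing subsheaf $F\subset M_L$ and proving it must be translation-invariant under a positive-dimensional subtorus. In the cyclic-Picard case \cite{einlamu} this is bypassed because any saturated subsheaf has $\det F$ a negative multiple of $L$ and a direct Riemann--Roch/restriction computation finishes the job; on an abelian variety $\Pic(X)$ is large, so $\det F$ can be an arbitrary effective-type class, and one genuinely needs to exploit the group action — probably by spreading $F$ out over $\Pic^0(X)$, invoking that the collection of destabilizing subsheaves of a fixed slope forms a bounded family stable under the $\Pic^0(X)$-action (via $t_x^*M_L\otimes(\text{line bundle})\cong M_L$-type identities), and concluding that $F$ is fixed by a positive-dimensional stabilizer. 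Getting this invariance argument to work cleanly, and matching the resulting bound exactly to $g!/(g-\dim Z)!$ rather than something weaker, is the technical heart of the proof.
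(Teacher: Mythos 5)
Your proposal is a plan rather than a proof, and its central step is left as an acknowledged gap: you never show that a saturated destabilizing subsheaf $F \subset M_L$ must be invariant under translation by a positive-dimensional abelian subvariety $Z$, nor do you carry out the descent to $X/Z$ or the computation that would produce the exact threshold $g!/(g-\dim Z)!$. As written, steps (i)--(iii) are statements of intent, and the Mehta--Ramanathan reduction you invoke transfers (semi)stability only for restrictions to complete intersections of very high degree, with no effective control that would let the hypothesis on abelian subvarieties enter. No reason is given why an arbitrary destabilizer should be semihomogeneous or translation-invariant, and the ``boundedness plus positive-dimensional stabilizer'' argument you gesture at is not formulated precisely enough to be checked. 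So the proposal cannot be completed as stated without supplying the very idea it identifies as the technical heart.

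For contrast, the paper avoids all of this by working with the \emph{determinant} of a potential destabilizer rather than the destabilizer itself. It uses the cohomological stability criterion (Lemma~\ref{lem:cohstab}): it suffices to prove $H^0(X,\Lambda^r M_L\otimes G^\vee)=0$ whenever $0<r<\rk M_L$ and $(G\cdot L^{g-1})\ge r\,\mu_L(M_L)$. By Green's vanishing (Lemma~\ref{lem:green}) this reduces to the purely numerical claim $r\ge h^0(X,G^\vee)$. The abelian subvariety entering the hypothesis is then $K(G^\vee)^0_{\mathrm{red}}$, the connected reduced kernel of $\phi_{G^\vee}$ for the effective line bundle $G^\vee$: one descends $G^\vee$ to an ample line bundle on $Y=X/K(G^\vee)^0_{\mathrm{red}}$ and applies the generalized Hodge-type inequality to get
\[
(G^\vee\cdot L^{g-1})\ \ge\ g!\,\sqrt[k]{\,h^0(X,G^\vee)\,\chi(X,L)^{k-1}\,},\qquad k=\dim Y,
\]
where the hypothesis $(L^{g-k}\cdot K(G^\vee)^0_{\mathrm{red}})\ge g!/k!$ is exactly what makes this bound close, yielding $r\ge h^0(X,G^\vee)$. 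Your intuition that the numerical hypothesis controls contributions coming from abelian subvarieties and their quotients is correct in spirit, but the mechanism is the kernel of the polarization map attached to the determinant of the destabilizer, not a translation-invariance property of the destabilizer itself; without that bridge your outline does not constitute a proof.
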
 
In particular, on a \emph{simple} abelian variety, $M_L$ is slope stable as soon as $L$ is globally generated (see Corollary~\ref{prop:stability_simple}). 

The proof of Theorem~\ref{thm:stability} goes as follows: we consider the particular case of simple abelian varieties of Theorem~\ref{newthm} (see Corollary~\ref{prop:stability_simple}); then, since polarized simple abelian varieties are dense in their moduli space (Proposition~\ref{prop:density}), we use the properness of the relative moduli space of semistable sheaves, in order to get a semistable sheaf on $X$, that turns out to be isomorphic to the original kernel bundle.

\subsection*{Notation}
We will freely use the notation and terminology of \cite{hule}.
In particular, a (semi)stable sheaf is a Gieseker (semi)stable sheaf. 
When the line bundle $L$ is clear from the context, for any $d>0$ we may use the notation $M_d:=M_{L^d}$.

\subsection*{Acknowledgements} 
We thank Gerald Welters for pointing us the argument for the density of simple abelian varieties in positive characteristic.
We would also like to thank the anonymous referee for suggesting improvements to the exposition.
This work was started during a visit of the first named author at Universitat de Barcelona in 2019, and it is part of his PhD thesis: he thanks the Institute of Mathematics of the University of Barcelona (IMUB) for its hospitality, and his advisor, Giuseppe Pareschi, for his teachings.

\section{Stability of syzygy bundles}
Let $(X,L)$ be a polarized abelian variety of dimension $g$.
Suppose that $L$ is globally generated. From the exact sequence \eqref{eqn:ML}, we get 
\begin{align*}
c_1 (M_L) &= - c_1 (L), \\ 
 \rk(M_L) &= h^0(X, L) - 1.
\end{align*}
Therefore, using Riemann-Roch and the fact that $L$ has no higher cohomology (see \cite[\S 16]{mu}), the slope of $M_L$ with respect to $L$ is 
\[
\mu_L(M_L) = -\frac{g! \chi(X,L)}{\chi(X,L)-1}.
\]

\subsection{Stability for primitive line bundles}

In this section we prove Theorem~\ref{newthm}. Actually,
in order to prove Theorem~\ref{thm:stability}, we only need the particular case of simple abelian varieties in Theorem~\ref{newthm}. Namely,
\begin{corollary}\label{prop:stability_simple}
Assume $X$ simple and $L$ globally generated.
Then the syzygy bundle $M_L$ is slope stable with respect to $L$.
\end{corollary}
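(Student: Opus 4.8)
The plan is to detect slope stability through torsion‑free quotients of $M_L$. Write $\chi:=\chi(X,L)=h^0(X,L)$; then $\rk M_L=\chi-1$, $(L^g)=g!\chi$, and $\mu_L(M_L)=-g!\chi/(\chi-1)$. If $\chi\le 2$ then $\rk M_L\le 1$ and there is nothing to prove, so assume $\chi\ge 3$ and suppose, for a contradiction, that $M_L$ is not slope stable: there is a saturated subsheaf $S\subset M_L$ of rank $s$, with $1\le s\le\chi-2$, for which the torsion‑free quotient $T:=M_L/S$ satisfies $\mu_L(T)\le\mu_L(M_L)$. The key device is the sheaf
\[
E:=\bigl(H^0(X,L)\otimes\cO_X\bigr)/S ,
\]
which by \eqref{eqn:ML} fits into $0\to T\to E\to L\to 0$. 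Thus $E$ is torsion‑free (an extension of torsion‑free sheaves), globally generated (a quotient of a trivial bundle), of rank $\chi-s$, with $\deg_L E=\deg_L T+g!\chi$, and $c_1(E)=-c_1(S)=:D$ is an effective divisor class (since $\det S$, hence $\cO_X(-D)$, is a subsheaf of the trivial bundle $\wedge^{s}(H^0(X,L)\otimes\cO_X)$). Since the map on global sections in \eqref{eqn:ML} is the identity, $H^0(X,M_L)=0$, so $H^0(X,S)=0$ and $H^0(X,L)\hookrightarrow H^0(X,E)$; in particular $h^0(X,E)\ge\chi$.

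The core of the argument will be a lower bound for $\deg_L E$. First I would record the elementary fact that a globally generated torsion‑free sheaf $F$ of rank $r$ on a smooth projective variety satisfies $h^0(\det F)\ge h^0(F)-r+1$: this is an induction on $r$, in which one splits off a general section $0\to\cO_X\to F\to F'\to 0$ with $F'$ again globally generated and torsion‑free of rank $r-1$, $\det F'=\det F$ and $h^0(F')\ge h^0(F)-1$. Applied to $E$ it gives $h^0(X,\det E)\ge\chi-(\chi-s)+1=s+1\ge 2$, so $D\ne 0$. This is where simplicity of $X$ enters: on a simple abelian variety a non‑zero effective divisor is ample (it is nef, and a non‑zero nef class on a simple abelian variety is non‑degenerate, hence — having sections — of index $0$, i.e.\ ample). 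So $\cO_X(D)$ is ample; Riemann--Roch on $X$ gives $(D^g)=g!\,h^0(X,\cO_X(D))\ge(s+1)g!$, and the Khovanskii--Teissier inequality (log‑concavity of $i\mapsto(D^i\cdot L^{g-i})$) yields
\[
\deg_L E=(D\cdot L^{g-1})\ \ge\ (D^g)^{1/g}(L^g)^{(g-1)/g}\ \ge\ g!\,(s+1)^{1/g}\chi^{(g-1)/g}.
\]

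To finish, I would compare this with the destabilizing hypothesis. From $\mu_L(T)\le\mu_L(M_L)$ one gets $\deg_L T\le -g!\chi(\chi-1-s)/(\chi-1)$, hence $\deg_L E=\deg_L T+g!\chi\le g!\chi\,s/(\chi-1)$. Combined with the displayed lower bound, this forces, after clearing denominators, $(s+1)(\chi-1)^g\le\chi\,s^g$ — an inequality that fails for every integer $s\in[1,\chi-2]$ and every $g\ge 1$: the function $s\mapsto(s+1)(\chi-1)^g-\chi s^g$ is concave on $[1,\chi-2]$ (affine if $g=1$), hence minimal at an endpoint, and it is positive there, since $2(\chi-1)^g-\chi>0$ for $\chi\ge 3$ and $(\chi-1)^{g+1}-\chi(\chi-2)^g\ge(\chi-2)^{g-1}>0$. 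This contradiction proves that $M_L$ is slope stable.

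The step I expect to be the real obstacle is exactly the passage from ``$\det E$ has many sections'' to ``$\deg_L E$ is large'': it requires both the ampleness of $\cO_X(D)$ — which is precisely why one needs $X$ simple here (and the numerical hypothesis $(L^{\dim Z}\cdot Z)\ge g!/(g-\dim Z)!$ in the general Theorem~\ref{newthm}, where $\cO_X(D)$ is merely nef and must be descended to an abelian quotient) — and a Hodge‑type inequality such as Khovanskii--Teissier; one then has to verify that the resulting numerical inequality is sharp enough to contradict the bound $\deg_L E\le g!\chi\,s/(\chi-1)$, which it is, but only by a vanishingly small margin as $\chi\to\infty$.
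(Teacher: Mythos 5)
Your argument is correct, and it reaches the conclusion by a genuinely different route from the paper. The paper deduces the corollary from Theorem~\ref{newthm}, whose proof runs through the cohomological stability criterion of Lemma~\ref{lem:cohstab}: one must kill $H^0(X,\Lambda^r M_L\otimes G^\vee)$ for every candidate destabilizing pair $(r,G)$, and the two inputs are Green's vanishing theorem (Lemma~\ref{lem:green}), which reduces this to the numerical statement $r\ge h^0(X,G^\vee)$, and the descent of $G^\vee$ to an abelian quotient where it becomes ample, followed by the generalized Hodge-type inequality. You instead work directly with a saturated destabilizing subsheaf $S\subset M_L$ and pass to the globally generated torsion-free quotient $E=(H^0(L)\otimes\cO_X)/S$; the role of Green's theorem is played by the elementary inductive lemma $h^0(\det F)\ge h^0(F)-\rk F+1$ for globally generated torsion-free $F$, and the role of the descent step is played by the observation that on a \emph{simple} abelian variety a line bundle with $h^0\ge 2$ is automatically nondegenerate of index $0$, hence ample with $(D^g)=g!\,h^0$. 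After that, both proofs feed the same Khovanskii--Teissier inequality into essentially the same sharp numerical estimate (your $(s+1)(\chi-1)^g>\chi s^g$ is the quotient-side mirror of the paper's bound $r\ge\frac{\chi-1}{\chi}\sqrt[k]{h^0(G^\vee)\chi^{k-1}}$ with $k=g$), so the tightness you observe as $\chi\to\infty$ is the same tightness the paper encounters. What your version buys is self-containedness for the simple case: no Koszul cohomology and no quotient abelian varieties. What it gives up is exactly what the paper's formulation is designed for: the cohomological criterion plus descent extends verbatim to non-simple $X$ under the hypothesis $(L^{\dim Z}\cdot Z)\ge g!/(g-\dim Z)!$ of Theorem~\ref{newthm}, whereas in your setup $\cO_X(D)$ is only effective and nondegenerate after descending, so you would have to reinstate that machinery anyway. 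The one step of yours that deserves full detail is the determinant lemma, specifically that the quotient of $F$ by a general section is again torsion-free (the saturation of $\cO_X s$ in $F$ has trivial determinant because a general $s$ vanishes in codimension $\ge 2$, and a numerically trivial line bundle with a section is trivial); your sketch is right, but this is where a careless write-up could go wrong.
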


Recall the following well-known lemma:
\begin{lemma}\label{lem:cohstab}
Let $(X,L)$ be a polarized $n$-dimensional smooth variety over $k$.
Let $E$ be a vector bundle on $X$.
Suppose that for any integer $r$ and any line bundle $G$ on $X$ such that
\begin{equation}\label{eqn:cohstab}
 0 < r < \rk(E)\quad \text{ and }\quad (G \cdot L^{n-1}) \geq r\, \mu_L(E)
\end{equation}
one has 
\[
H^0(X, \Lambda^r E\, \tensor \, G^{\vee}) = 0.
\]
Then $E$ is slope stable with respect to $L$.
\end{lemma}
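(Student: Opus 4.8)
The plan is to establish the contrapositive: assuming $E$ is not slope stable, I will produce a pair $(r, G)$ satisfying \eqref{eqn:cohstab} for which $H^0(X, \Lambda^r E \otimes G^\vee) \neq 0$. So suppose $E$ is not slope stable; then there is a saturated subsheaf $0 \neq F \subsetneq E$ with $\mu_L(F) \geq \mu_L(E)$, and we may take $F$ to be a destabilizing subbundle in codimension one (replacing $F$ by its saturation only increases the slope, and outside a closed subset of codimension $\geq 2$ it is a subbundle of $E$). Set $r = \rk(F)$, so $0 < r < \rk(E)$, and let $G = \det(F)$, a line bundle on $X$ (well-defined since $\Pic$ of a smooth variety agrees with divisor classes; $\det F$ is computed as the double dual of $\Lambda^r F$). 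The point of this choice is that taking top exterior powers along $F \hookrightarrow E$ over the big open set where both are bundles gives a nonzero map $\det F \to \Lambda^r E$ of sheaves on that open set, which extends to a nonzero sheaf map $G = \det F \to \Lambda^r E$ over all of $X$ because $\Lambda^r E$ is torsion-free (locally free) and the complement has codimension $\geq 2$. Such a nonzero map is exactly a nonzero section of $\Lambda^r E \otimes G^\vee$.

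Next I would check the numerical inequality. By construction $\mu_L(F) = \frac{(c_1(F) \cdot L^{n-1})}{r} = \frac{(G \cdot L^{n-1})}{r}$, so $\mu_L(F) \geq \mu_L(E)$ translates precisely to $(G \cdot L^{n-1}) \geq r\, \mu_L(E)$, which is the second condition in \eqref{eqn:cohstab}. Thus $(r, G)$ satisfies both hypotheses in \eqref{eqn:cohstab}, yet $H^0(X, \Lambda^r E \otimes G^\vee) \neq 0$, contradicting the assumption of the lemma. Therefore $E$ must be slope stable.

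The one subtlety — and the step I would be most careful about — is the passage from a destabilizing subsheaf to the section of $\Lambda^r E \otimes G^\vee$: one must take the saturation of $F$ (so that $E/F$ is torsion-free, ensuring $F$ is a subbundle away from a codimension $\geq 2$ locus $\Sigma$), verify that $\det F$ makes sense as a line bundle on all of $X$ (reflexive hull of $\Lambda^r F$, which is a rank-one reflexive sheaf on a smooth variety, hence a line bundle), construct the map $\det F|_{X \setminus \Sigma} \to \Lambda^r E|_{X \setminus \Sigma}$ by the universal property of exterior powers applied to the subbundle inclusion, and then extend it over $\Sigma$ using Hartogs / the fact that $j_* \Lambda^r E|_{X \setminus \Sigma} = \Lambda^r E$ since $\Lambda^r E$ is locally free and $\codim \Sigma \geq 2$. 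Nonvanishing of the extended map is automatic since it is nonzero on a dense open set. Everything else is a direct unwinding of definitions; no deep input is needed beyond standard facts about reflexive sheaves and saturation on smooth varieties.
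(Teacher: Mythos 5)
Your proof is correct and is precisely the standard argument for this ``well-known lemma,'' which the paper itself states without proof: pass to the saturation of a destabilizing subsheaf $F$, set $r=\rk F$ and $G=\det F$, extend the induced map $\det F\to \Lambda^r E$ over the codimension-$\geq 2$ non-locally-free locus to get a nonzero section of $\Lambda^r E\otimes G^{\vee}$, and observe that $\mu_L(F)\geq\mu_L(E)$ is exactly the numerical condition \eqref{eqn:cohstab}. The subtleties you flag (saturation, $\det F$ as a line bundle, extension across codimension two) are exactly the right ones and are handled correctly.
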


 If $E$ satisfies the condition of Lemma~\ref{lem:cohstab}, it is said to be \emph{cohomologically stable} with respect to $L$. 
In order to prove that a kernel bundle is cohomologically stable, following Coand\v{a} \cite{co}, we make use of a vanishing result of M.~Green:
\begin{lemma}[{\cite[Theorem~3.a.1]{green}, see also \cite[Lemma~2.2]{co}}]\label{lem:green}
Let $L$ and $Q$ be line bundles on a smooth projective variety $X$, with $L$ globally generated. If $r \geq h^0(X, Q)$, then 
\[
H^0(X, \Lambda^r M_{L} \tensor Q) = 0.
\] 

\end{lemma}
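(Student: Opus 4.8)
The plan is to prove Lemma~\ref{lem:green} by reduction to the projective space, exploiting that $M_L$ on $X$ is a restriction of the tautological subbundle. Concretely, let $\varphi = \varphi_{|L|} : X \to \P := \P(H^0(X,L)^\vee)$ be the morphism associated to the globally generated line bundle $L$, so that pulling back the Euler sequence
\[
0 \to M_{\cO_\P(1)} \to H^0(\P, \cO_\P(1)) \otimes \cO_\P \to \cO_\P(1) \to 0
\]
along $\varphi$ and using $\varphi^*\cO_\P(1) = L$, $\varphi^* H^0(\P,\cO_\P(1)) = H^0(X,L)$ identifies $M_L \cong \varphi^* M_{\cO_\P(1)}$. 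Hence $\Lambda^r M_L \cong \varphi^* \Lambda^r M_{\cO_\P(1)}$, and a section of $\Lambda^r M_L \otimes Q$ would be a section of $\varphi^*(\Lambda^r M_{\cO_\P(1)})$ twisted by $Q$.

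The key identity I would use next is the well-known resolution of $\Lambda^r M_{\cO_\P(1)}$ by the Koszul complex: on $\P^N$ with $N+1 = h^0(X,L)$, one has $\Lambda^r M_{\cO_\P(1)} \cong \Omega^r_\P(r)$ up to a twist, and more usefully $H^0(\P, \Lambda^r M_{\cO_\P(1)} \otimes \cO_\P(j)) = 0$ for $j \le r$ except in the "boundary" situations. Rather than pull back, the cleaner route — and the one Green's original argument takes — is to express the sections directly: a section of $\Lambda^r M_L \otimes Q$ on $X$ lifts to a piece of the Koszul-type complex built from the evaluation map, giving an exact sequence whose terms involve $\Lambda^{r+1}(H^0(X,L)) \otimes H^0(X, Q)$, $\Lambda^r(H^0(X,L)) \otimes H^0(X, L \otimes Q)$, and so on. The point is that $H^0(X, \Lambda^r M_L \otimes Q)$ injects into $\ker\big(\Lambda^{r+1} H^0(X,L) \otimes H^0(X,Q) \to \Lambda^r H^0(X,L) \otimes H^0(X, L\otimes Q)\big)$, or rather sits as a cohomology of the Koszul complex of the section ring, and Green's vanishing theorem for Koszul cohomology (the statement $K_{p,q} = 0$ in an appropriate range) forces this to vanish once $r \ge h^0(X,Q)$.

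The technical heart is therefore the identification of $H^0(X, \Lambda^r M_L \otimes Q)$ with a Koszul cohomology group and the invocation of Green's syzygy vanishing. I would set up the Koszul complex
\[
\cdots \to \Lambda^{r+1} H^0(X,L) \otimes \cO_X \xrightarrow{\ } \Lambda^r M_L \otimes L \to 0
\]
obtained by taking exterior powers of \eqref{eqn:ML} (so that $0 \to \Lambda^{r+1} M_L \to \Lambda^{r+1} H^0(X,L)\otimes \cO_X \to \Lambda^r M_L \otimes L \to 0$ is exact since $M_L$ is locally free and the sequence is locally split), twist by $Q$, and take cohomology. Iterating, $H^0(X, \Lambda^r M_L \otimes Q)$ is controlled by the groups $H^i(X, \Lambda^{r+i} H^0(X,L) \otimes Q) = \Lambda^{r+i} H^0(X,L) \otimes H^i(X,Q)$; Green's theorem is precisely the statement that the induced maps on these make the relevant cohomology vanish once $r \ge h^0(X,Q)$. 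Since this is exactly \cite[Theorem~3.a.1]{green} (equivalently \cite[Lemma~2.2]{co}), the bulk of the work is bookkeeping with the exterior-power filtration; no abelian-variety-specific input is needed here, which is why the lemma is stated for arbitrary smooth projective $X$.

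The main obstacle I anticipate is purely organizational: carefully justifying the local splitting of \eqref{eqn:ML} so that its exterior powers remain exact, and then tracking the hypercohomology spectral sequence (or the iterated connecting maps) of the resulting Koszul-type complex twisted by $Q$ to see that the only possibly-nonvanishing contribution to $H^0$ comes from a term that Green's theorem kills when $r \ge h^0(X,Q)$. Since the paper simply cites Green's theorem and Coand\u{a}'s reformulation, I would in practice not reprove it — the "proof" of Lemma~\ref{lem:green} is the citation — but the sketch above is how one checks that the cited statement has exactly the shape needed.
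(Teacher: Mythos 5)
The paper offers no proof of this lemma either: it is quoted verbatim from Green (Theorem~3.a.1) and Coand\u{a} (Lemma~2.2), so your decision to treat the citation as the proof is exactly what the paper does. Your supplementary sketch correctly identifies $H^0(X,\Lambda^r M_L\otimes Q)$ with a Koszul cohomology group of the pair $(Q,L)$, which is the form in which Green states his vanishing theorem; the only caution is that the vanishing itself is not obtained by iterating the exterior-power sequences and tracking hypercohomology (that bookkeeping never invokes the hypothesis $r\geq h^0(X,Q)$, and the groups $H^i(X,Q\otimes L^{-i})$ appearing in the resolution need not vanish) but by Green's separate induction on $h^0(X,Q)$ via evaluation at general points.
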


\begin{proof}[Proof of Theorem~\ref{newthm}]
We want to prove that $M_L$ satisfies the hypothesis of Lemma~\ref{lem:cohstab}, that is, $M_L$ is cohomologically stable.
Let $r$ and $G$ satisfying Condition \eqref{eqn:cohstab}.
If $h^0(X, G^{\vee}) \leq 1$, then, by Green's vanishing Lemma~\ref{lem:green}, we are done.
Hence, we assume 
\[
h^0(X, G^{\vee}) > 1.
\] 
Consider the usual morphism of abelian varieties
\[
\phi_{G^\vee} \colon X \rightarrow \widehat{X}, \quad x \mapsto t_x^*(G^\vee) \tensor G
\] 
induced by $G^\vee$, where $t_x \colon X \rightarrow X$ is the translation morphism by the element $x \in X$. The reduced connected component of the kernel $K(G^\vee)$ of 
$\phi_{G^\vee}$ containing $0$ is an abelian subvariety of $X$, denoted by $K(G^\vee)^0_{\mathrm{red}}$. 
Note that $K(G^\vee)^0_{\mathrm{red}} \neq X$, because otherwise $G^\vee \in \Pic^0 X$, hence $h^0(X, G^\vee) \leq 1$.
Let 
\[
p \colon X \rightarrow Y := X/K(G^\vee)^0_{\mathrm{red}} 
\]
be the non-trivial quotient to the abelian variety $Y$
of dimension $k = g - \dim K(G^\vee)^0_{\mathrm{red}} > 0$.
Being $G^\vee$ an effective line bundle, we have that the restriction $G^\vee|_{K(G^\vee)^0_{\mathrm{red}}}$ is trivial. Hence, $h^0(K(G^\vee)^0_{\mathrm{red}}, G^\vee|_{K(G^\vee)^0_{\mathrm{red}}}) = 1$ and, 
by cohomology and base change, $\oG^\vee := p_*G^\vee$ is a line bundle on $Y$ such that 
\[
p^* \oG^\vee = G^\vee.
\] 
Moreover, by the projection formula
\[
h^0(X, \oG^\vee) = h^0(X, G^\vee),
\]
and, since $K(\oG^\vee)$ is finite, $\oG^\vee$ is ample (see \cite[p.\ 60]{mu}). In particular, $\chi(X, \oG^\vee) = h^0(X, \oG^\vee)$.
Note that if $G^\vee$ was already ample, we are just considering $Y=X$ and $k=g$.

Now, we can rewrite the inequality $(G\cdot L^{g-1})\geq r\, \mu_L(M_L)$ as 
\[
 r\geq \frac{ \chi(X,L)-1}{g! \chi(X,L)}(G^{\vee}\cdot L^{g-1}).
\]
By the generalized Hodge-type inequality
\cite[Theorem~1.6.3(i) and Remark~1.6.5]{laI}
\begin{align*}
(G^{\vee}\cdot L^{g-1})
&\geq \sqrt[k]{((G^{\vee})^k\cdot L^{g-k}) (L^g)^{k-1}}\\
&=\sqrt[k]{((G^{\vee})^k\cdot L^{g-k}) (g!\chi(X,L))^{k-1}}\\
&=\sqrt[k]{((\oG^{\vee})^k\cdot p_*L^{g-k})\cdot(g!\chi(X,L))^{k-1}}\\
&=\sqrt[k]{(L^{g-k}\cdot K(G^\vee)^0_{\mathrm{red}})\cdot (\oG^{\vee})^k\cdot (g!\chi(X,L))^{k-1}}\\
&=\sqrt[k]{(L^{g-k}\cdot K(G^\vee)^0_{\mathrm{red}})\cdot k!h^0(X,G^\vee) \cdot (g!\chi(X,L))^{k-1}},
\end{align*}
where in the second to last equality we have used that $L$ is ample, so when restricted to $K(G^\vee)^0_{\mathrm{red}}$ is not trivial. 
Moreover, $[L]^{g-k}$ is an effective $k$-cycle on $X$ and $p$ restricted to it is finite and has degree $(L^{g-k}\cdot K(G^\vee)^0_{\mathrm{red}})$.
Now, by hypothesis,
 \[
 (L^{g-k}\cdot K(G^\vee)^0_{\mathrm{red}})\geq \tfrac{g!}{k!}.
 \]
Then
\begin{align*}
 (G^{\vee}\cdot L^{g-1})\geq g! \sqrt[k]{ h^0(X,G^\vee)\cdot \chi(X,L)^{k-1}}. 
\end{align*}
So we obtain
\begin{equation}\label{eqn:risc}
r\geq\frac{\chi(X,L)-1}{g! \chi(X,L)}(G^{\vee}\cdot L^{g-1})\geq \frac{\chi(X,L)-1}{\chi(X,L)}\sqrt[k]{h^0(X, G^{\vee}) \chi(X, L)^{k-1} }.
\end{equation}
If $h^0(X,G^{\vee})\geq \chi(X,L)$, then \eqref{eqn:risc} becomes 
 $r\geq \chi(X,L)-1$ contradicting $r<\rk(M_L)$ in Condition \eqref{eqn:cohstab}. Thus $\chi(X,L)>h^0(X,G^{\vee})$ and \eqref{eqn:risc} becomes
\[
 r> \frac{\chi(X, L) - 1}{\chi(X, L)} h^0(X, G^{\vee}) = h^0 (X,G^{\vee})-\frac{h^0(X,G^{\vee})}{\chi(X,L)}.
\]
Since $\chi(X,L)>h^0(X,G^{\vee})$ and $r$ is an integer, this is equivalent to $r\geq h^0(X,G^{\vee})$ and, by Green's vanishing Lemma~\ref{lem:green}, we are done.
\end{proof}

Note that the proof is valid in any characteristic.

\subsection{Proof of Theorem~\ref{thm:stability}}

We start with the following 
\begin{proposition}\label{prop:density}
Given $g>0$ and $k$ an algebraically closed field, the set of $(d_1,\ldots,d_g)$-polarized simple abelian varieties of dimension $g$ over $k$ is dense in its moduli space $\mathcal{A}_ g(k)$.
\end{proposition}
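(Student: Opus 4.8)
The plan is to exhibit, for a very general choice of parameters, a simple abelian variety carrying a polarization of the prescribed type $(d_1,\ldots,d_g)$, and then to upgrade this to a density statement. The natural home for the argument is the moduli space $\mathcal{A}_g^{(d_1,\ldots,d_g)}$ of abelian varieties with a polarization of that fixed type, which is irreducible of the expected dimension $g(g+1)/2$; so it suffices to show that the non-simple locus is a countable union of proper closed subvarieties. Over $\C$ one can make this very concrete via the period-domain description: a polarized abelian variety of type $(d_1,\ldots,d_g)$ corresponds to a point of the Siegel upper half-space $\mathfrak{H}_g$ (with the polarization encoded by the fixed diagonal matrix $\mathrm{diag}(d_1,\ldots,d_g)$ together with $\tau$), and $A_\tau$ fails to be simple exactly when it admits a nontrivial abelian subvariety, equivalently when the lattice $\Z^{2g}$ has a saturated sublattice preserved, after complexification, by the subspace $\langle \tau e_1,\ldots,\tau e_g\rangle$. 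For each fixed sublattice this is a nontrivial algebraic (indeed, analytic, but by descent algebraic) condition on $\tau$, cutting out a proper closed subset of $\mathfrak{H}_g$; since there are only countably many sublattices, the non-simple locus is a countable union of proper closed subsets, hence its complement is dense (even of full measure). This gives the statement over $\C$, and then over any algebraically closed field of characteristic $0$ by the Lefschetz principle.

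The characteristic $p$ case is the one requiring genuine care, and I expect it to be the main obstacle: the transcendental period argument is unavailable, so one must argue directly on the $\F_p$-scheme $\mathcal{A}_g^{(d_1,\ldots,d_g)}$. The cleanest route is a specialization/lifting argument of the kind attributed to Welters in the acknowledgements. One picks the geometric generic point of $\mathcal{A}_g^{(d_1,\ldots,d_g)}$ over $\overline{\F_p}$; the corresponding abelian variety is defined over a field finitely generated over $\F_p$, which lifts to characteristic $0$ (by versality of the moduli space, or Serre–Tate/deformation theory for the formal lift together with algebraization), and one transports the simplicity: a nonsimple abelian variety over a field is nonsimple after any field extension, and endomorphism rank can only go up under specialization, so simplicity of a suitable characteristic-$0$ lift forces simplicity of the original. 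Concretely, I would show that the non-simple locus in $\mathcal{A}_g^{(d_1,\ldots,d_g)}$ is a countable union of proper closed subschemes: for each isogeny type of a would-be factor decomposition, the locus of abelian varieties admitting such a splitting (up to isogeny) is closed and proper — properness because e.g. the generic principally polarized abelian variety of dimension $g$ is simple in every characteristic, a fact one can establish by exhibiting a single simple example (say the Jacobian of a very general curve, which is simple) and invoking semicontinuity of $\dim\mathrm{End}$. Then $\mathcal{A}_g^{(d_1,\ldots,d_g)}(\overline{\F_p})$ minus a countable union of proper closed subsets is still dense because $\overline{\F_p}$ is not countable — wait, it is countable, so one cannot argue by cardinality; instead one uses that a countable union of proper closed subsets of an irreducible variety over an algebraically closed field does not contain the generic point, hence does not cover a dense open, and therefore the simple locus, being the complement, contains a dense (in fact, its generic point is in it) set — more precisely one shows the simple locus contains a dense subset of closed points by a Baire-type/pointwise argument, or simply notes that each proper closed subset omits a dense open and intersects finitely, then uses that the moduli functor is defined over $\F_p$ so that density at the level of the generic point descends.

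The key steps, in order: (1) fix the type $(d_1,\ldots,d_g)$ and work on the irreducible moduli space $\mathcal{A}_g^{(d_1,\ldots,d_g)}$, reducing the claim to: the non-simple locus is not dense. (2) Over $\C$, identify the non-simple locus with a countable union of proper analytic subvarieties of $\mathfrak{H}_g$ indexed by sublattices of $\Z^{2g}$, hence non-dense; extend to all characteristic-$0$ algebraically closed fields by Lefschetz. (3) In characteristic $p$, show the non-simple locus is a countable union of proper \emph{closed} subschemes, with properness coming from the existence of at least one simple polarized abelian variety of the given type (constructed by polarizing a simple abelian variety — e.g. a very general Jacobian, simple in any characteristic by semicontinuity of endomorphism rank applied to a curve with $\mathrm{End}(\mathrm{Jac})=\Z$ — and pushing the polarization to the right type by standard isogeny constructions, checking the elementary-divisor types can be matched), together with semicontinuity of $\dim\mathrm{End}$ to get closedness. (4) Conclude density of the simple locus. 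The real work is Step (3): producing a simple abelian variety carrying a polarization of the \emph{prescribed} type in characteristic $p$, and organizing the countably-many closed conditions correctly; everything else is either classical or formal.
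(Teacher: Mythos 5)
Your characteristic-zero discussion is essentially fine for $k=\C$ (and the paper simply cites Birkenhake--Lange for this case), but the positive-characteristic part --- which you correctly identify as the crux --- has a genuine gap exactly at the point where you hesitate. Over $k=\overline{\F}_p$ the set of closed points of \emph{any} variety is countable, hence is itself a countable union of proper closed subsets. So even if you establish that the non-simple locus is a countable union of proper closed subschemes of $\mathcal{A}_g$, this gives no information whatsoever about the density of the simple locus among $k$-points: the complement of such a union may contain no closed points at all. Your attempted repair via the generic point does not work, because the proposition is a statement about density of a set of $k$-rational points, and ``the generic point is not in the bad locus'' is compatible with every $k$-point being in the bad locus. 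No Baire-type or cardinality argument is available over a countable field, so steps (3)--(4) of your plan cannot be completed as stated.

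The missing idea is to replace the ``countable union of closed subsets'' strategy by a density statement for a single \emph{isogeny class}: by Chai's theorem \cite{ch}, every ordinary symplectic isogeny class is dense in the moduli space in positive characteristic. Since simplicity is an isogeny invariant, it then suffices to exhibit \emph{one} ordinary and (geometrically) simple abelian variety of dimension $g$; this exists over $\F_p$ by Howe--Zhu \cite{hozh}, and both ordinariness and simplicity persist under base change to $k$. The whole isogeny class of that one example is then a dense set of simple points, which is exactly the paper's argument. Note also that this route sidesteps the issue you flag of matching the prescribed polarization type $(d_1,\ldots,d_g)$, since Chai's density is formulated for the symplectic isogeny class inside the moduli space of the given type; your alternative of constructing a simple example with the exact prescribed type by ``pushing the polarization through isogenies'' is additional unfinished work that the paper's argument avoids.
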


\begin{proof}
For fields $k$ of characteristic $0$, this is a classical result (see, e.g., \cite[Exercise~8.11.(1)]{bila}).
In positive characteristic, we can argue as follows.
Given $(X, L)$ an \emph{ordinary} $(d_1,\ldots,d_g)$-polarized abelian variety, the set of $(X', L')$ isogenous to $(X,L)$ is dense in the moduli space $\mathcal A _ g(k)$ (see \cite[Theorem~2, p.\ 477]{ch}).
Hence, if we can produce an ordinary and simple abelian variety $(X, L)$ over $k$, we are done.
In order to achieve that, we can reduce ourselves to the case of $k=\overline{\mathbb{F}}_p$, since both properties are preserved by base change from $\overline{\mathbb{F}}_p$ to our starting $k$.
Finally, from \cite{hozh} we get the existence of ordinary and geometrically simple abelian varieties of any dimension over $\mathbb{F}_p$.
\end{proof}

Let $(X,L)$ be a polarized non-simple abelian variety of dimension $g$.
Consider $\cX\to T$ a family of abelian varieties polarized by a relatively ample line bundle $\cL$, such that $(\cX,\cL)_{0}\cong (X,L)$, for $0 \in T$.
We will denote by 
\[
 S:=\{ s \in T \ |\ \cX_s \textrm{ is a simple abelian variety} \}
\]
the corresponding dense subset in $T$.

Now we are ready to give the proof of Theorem~\ref{thm:stability}.
By Corollary~\ref{prop:stability_simple}, for any $d\geq 2$ and any polarized simple abelian variety $(\cX_s,\cL_s)$, the corresponding syzygy bundle $M_{d}$, as defined in \eqref{eqn:ML}, is slope stable with respect to $\cL_s$.
Let $P_d(m):=\chi(M_{L^d}\otimes L^m)$ be the Hilbert polynomial of $M_{L^d}$. 
By \cite[Theorem~0.2]{langer}, we have a projective relative moduli space $\cM_{\cX/T}(P_d)\to T$.
Since 
\[
M_d\in \cM_{\cX_s}(P_d)\cong M_{\cX/T}(P_d)_s
\]
for any $s\in S$ and $S$ is dense in $T$, there is a family $\cF\in \cM_{\cX/T}(P_d)$ such that $\cF_s=M_{\cL_s^d}$ for $s\in S$, by the properness of the relative moduli space.
Since $S$ is dense, we have 
\[
\dim \Hom(\cF_{0},\cO_{X})\geq h^0(\cX_s, M_{\cL_s^d}^{\vee}) = h^0(\cX_s, \cL_s^d) = h^0(X,L^d),
\]
by semicontinuity.
Thus, we can consider $\psi\colon \cF_{0} \to H^0(X,L^d) \tensor \cO_X$ generically of maximal rank, which is injective, since $\cF_{0}$ is torsion-free.
Then, we can consider the following commutative diagram with exact rows 
\begin{equation} \label{labdiagram} \vcenter {
\xymatrix{
0 \ar[r] & \cF_{0} \ar[r]^-{\psi} \ar@{^{(}->}[d] & H^0(X,L^d) \tensor \cO_X \ar[r] \ar@{=}[d] & Q \ar[r] \ar[d] & 0 \\ 
 0 \ar[r] & \cF_{0}^{\vee\vee} \ar[r] & H^0(X,L^d) \tensor \cO_X \ar[r]^-{\eta} & Q^{\vee\vee} }
}
\end{equation}
where $Q:=\operatorname{coker} \psi$ has the same numerical class of $L^d$.
Let $T(Q) \subset Q$ be the torsion subsheaf of $Q$, and take the resulting short exact sequence
\begin{equation}\label{torsion1}
0 \rightarrow T(Q) \rightarrow Q \rightarrow Q' := Q/T(Q) \rightarrow 0.
\end{equation}
We have that $T(Q) = \ker[Q \rightarrow Q^{\vee \vee}] \cong \operatorname{coker}[\cF_0 \hookrightarrow \cF_0^{\vee \vee}]$ has codimension greater or equal to $2$, because $\cF_0$ is torsion-free. 
Therefore, $\det(T(Q)) = \cO_X$ and $\det(Q) = \det(Q')$ (see, e.g., \cite[Chap.\ V, (6.14) and (6.9)]{ko}). Moreover, by applying $\mathcal{H}om(\blank, \cO_X)$ to (\ref{torsion1}), we get 
\[
0 \rightarrow (Q')^{\vee} \rightarrow Q^{\vee} \rightarrow T(Q)^{\vee}, 
\] 
and $T(Q)^{\vee} = 0$ (see, e.g., \cite[Proposition~1.1.6(i)]{hule}).
Hence, $(Q')^{\vee \vee} \cong Q^{\vee \vee}$ and $\det(Q') = (\det(Q'))^{\vee \vee} = \det((Q')^{\vee \vee}) = \det(Q^{\vee \vee})$, where in the second equality we used that $Q'$ is torsion-free (see, e.g., \cite[Chap.\ V, (6.12)]{ko}).
Being a reflexive sheaf of rank $1$, $Q^{\vee \vee}$ is a line bundle, and, by the above discussion, it is algebraically equivalent to $L^d$, say $Q^{\vee\vee} = L^d \otimes \alpha = t_x^* L^d$, for some $\alpha \in \Pic^0{X}$ and $x \in X$, where $t_x : X \rightarrow X$ is the translation by $x$.
Thus the map $\eta$ in (\ref{labdiagram}) is surjective. If not, indeed, we would have 
\[
0\to \cF_0^{\vee\vee}\to H^0(X,L^d)\otimes \cO_X\xrightarrow{\eta} \cI_Z\otimes t_x^*L^d\to 0,
\]
where $Z \subseteq X$ is a closed subscheme.
From our hypothesis on $d$, the (pullback along $t_x$ of the) evaluation map for $L^d$ is surjective, so the map $\eta$ is forced to factor via a non-trivial linear quotient $V$ of $H^0(X,L^d)$:
\[
\xymatrix{
0\ar[r] & \cF_{0}^{\vee\vee} \ar[r] & H^0(X,L^d) \tensor \cO_X \ar[r]^-{\eta} \ar[d] & \cI_Z\otimes t_x^*L^d \ar[r] \ar@{=}[d] & 0 \\ 
 & & V \tensor \cO_X \ar[r] & \cI_Z\otimes t_x^*L^d\ar[r]& 0. }
\]
Hence, if we denote $W:= \ker [H^0(X,L^d) \rightarrow V]$, then $W\otimes \cO_X\hookrightarrow \cF_0^{\vee\vee}$. This contradicts the fact that $\cF_0^{\vee\vee}$ is slope semistable with respect to $L$, because it has negative slope.
From the commutativity of the diagram (\ref{labdiagram}), we get that the map $Q \rightarrow Q^{\vee \vee}$ is surjective too, and, since $Q$ and $Q^{\vee \vee}$ have the same numerical class, it is also injective. Therefore, $Q\cong Q^{\vee\vee}$ and $\cF_0 \cong \cF_0^{\vee \vee} \cong t_x^* M_{L^d}$.
In particular, $M_{L^d}$ is semistable.
\qed


\begin{thebibliography}{Mum08}

\bibitem[BL04]{bila} 
Ch. Birkenhake and H. Lange.
\newblock {\em Complex abelian varieties}.
\newblock Springer-Verlag, Berlin, Second edition, 2010.



\bibitem[Bre08]{br} 
H. Brenner. 
\newblock Looking out for stable syzygy bundles. With an appendix by Georg Hein.
\newblock {\em Adv. Math.}, 219(2):401--427, 2008.

\bibitem[Cam12]{ca}
Chiara Camere.
\newblock About the stability of the tangent bundle of {$\Bbb{P}^n$} restricted
 to a surface.
\newblock {\em Math. Z.}, 271(1-2):499--507, 2012.

\bibitem[Cha95]{ch}
Ching-Li Chai.
\newblock Every ordinary symplectic isogeny class in positive characteristic is
 dense in the moduli.
\newblock {\em Invent. Math.}, 121(3):439--479, 1995.

\bibitem[Coa11]{co}
Iustin Coand\u{a}.
\newblock On the stability of syzygy bundles.
\newblock {\em Internat. J. Math.}, 22(4):515--534, 2011.

\bibitem[EL92]{einla}
L.~Ein and R.~Lazarsfeld.
\newblock Stability and restrictions of picard bundles, with an application to
 the normal bundles of elliptic curves.
\newblock In {\em Complex projective geometry (Trieste, 1989/Bergen, 1989),
 London Math. Soc. Lecture Note Ser.}, number 179, pages 149--156, 1992.

\bibitem[ELM13]{einlamu}
Lawrence Ein, Robert Lazarsfeld, and Yusuf Mustopa.
\newblock Stability of syzygy bundles on an algebraic surface.
\newblock {\em Math. Res. Lett.}, 20(1):73--80, 2013.

\bibitem[EN18]{einniu}
Lawrence Ein and Wenbo Niu.
\newblock Interpolation for curves of large degree.
\newblock {\em Asian J. Math.}, 22(2):307--316, 2018.

\bibitem[Fle84]{fl}
Hubert Flenner.
\newblock Restrictions of semistable bundles on projective varieties.
\newblock {\em Comment. Math. Helv.}, 59(4):635--650, 1984.

\bibitem[Gre84]{green}
Mark~L. Green.
\newblock Koszul cohomology and the geometry of projective varieties.
\newblock {\em J. Differential Geom.}, 19(1):125--171, 1984.

\bibitem[HL10]{hule}
Daniel Huybrechts and Manfred Lehn.
\newblock {\em The geometry of moduli spaces of sheaves}.
\newblock Cambridge Mathematical Library. Cambridge University Press,
 Cambridge, second edition, 2010.

\bibitem[HZ02]{hozh}
Everett~W. Howe and Hui~June Zhu.
\newblock On the existence of absolutely simple abelian varieties of a given
 dimension over an arbitrary field.
\newblock {\em J. Number Theory}, 92(1):139--163, 2002.

\bibitem[Kob87]{ko}
Soshichi Kobayashi.
\newblock {\em Differential geometry of complex vector bundles}, volume~15 of
 {\em Publications of the Mathematical Society of Japan}.
\newblock Princeton University Press, Princeton, 1987.

\bibitem[Lan04]{langer}
Adrian Langer.
\newblock Semistable sheaves in positive characteristic.
\newblock {\em Ann. of Math. (2)}, 159(1):251--276, 2004.



\bibitem[Lan10]{langer2} 
Adrian Langer.
\newblock A note on restriction theorems for semistable sheaves.
\newblock {\em Math. Res. Lett.}, 17(5):823--832, 2010.


\bibitem[Laz04]{laI}
Robert Lazarsfeld.
\newblock {\em Positivity in algebraic geometry. {I}}, volume~48 of {\em
 Ergebnisse der Mathematik und ihrer Grenzgebiete. 3. Folge. A Series of
 Modern Surveys in Mathematics}.
\newblock Springer-Verlag, Berlin, 2004.
\newblock Line bundles and linear series.

\bibitem[MR83]{mera}
V.~B. Mehta and A.~Ramanathan.
\newblock Homogeneous bundles in characteristic {$p$}.
\newblock In {\em Algebraic geometry---open problems ({R}avello, 1982)}, volume
 997 of {\em Lecture Notes in Math.}, pages 315--320. Springer, Berlin, 1983.

\bibitem[Mum08]{mu}
David Mumford.
\newblock {\em Abelian varieties}, volume~5 of {\em Tata Institute of
 Fundamental Research Studies in Mathematics}.
\newblock Published for the Tata Institute of Fundamental Research, Bombay; by
 Hindustan Book Agency, New Delhi, 2008.
\newblock With appendices by C. P. Ramanujam and Yuri Manin, Corrected reprint
 of the second (1974) edition.

\end{thebibliography}
\end{document}